\documentclass[11pt,reqno]{amsart}

\setcounter{tocdepth}{4}
\setcounter{secnumdepth}{4}

\usepackage{geometry}

\geometry{lmargin=1in, rmargin=1in}

\title{Eichler-Shimura Relation on Intersection Cohomology}
\author{Zhiyou Wu}

\usepackage{amsmath,amssymb,fancyhdr,url,amsthm,mathtools,mathrsfs,eufrak,setspace}
\usepackage[pdftex]{graphicx}
\usepackage{tikz-cd}
\usetikzlibrary{cd}
\usepackage[mathscr]{euscript}
\usepackage{tikz}
\usepackage{dsfont}
\usetikzlibrary{matrix}

\makeatletter
\newcommand*{\rom}[1]{\expandafter\@slowromancap\romannumeral #1@}
\makeatother

\usepackage{relsize} 
\usepackage[bbgreekl]{mathbbol} 
\usepackage{amsfonts} 
\DeclareSymbolFontAlphabet{\mathbb}{AMSb} 
\DeclareSymbolFontAlphabet{\mathbbl}{bbold}

\usepackage[toc,page]{appendix}

\makeatletter

\makeatother

\newtheorem{proposition}{Proposition}[section]
\newtheorem{theorem}[proposition]{Theorem}

\newtheorem{corollary}[proposition]{Corollary}

\newtheorem{remark}[proposition]{Remark}
\newtheorem{lemma}[proposition]{Lemma}

\begin{document}

\maketitle

\begin{abstract}
We prove the Eichler-Shimura relation on intersection cohomolgoy of minimal compactifications of Shimura Varieties of Hodge type. 
\end{abstract}

\section{Introduction}

Let $G$ be a reductive group over $\mathbb{Q}$, and $\mu: \text{Res}_{\mathbb{C}/\mathbb{R}} \mathbb{G}_m \longrightarrow G_{\mathbb{R}}$ a cocharacter giving rise to Shimura varieties $X_K$ parameterized by level groups $K \subset G(\mathbb{A}_f)$. Let $p$ be a prime over which $G$ has a reductive model $G_{\mathbb{Z}_p}$ over $\mathbb{Z}_p$, so $X_{K^p}:= X_{K^pG_{\mathbb{Z}_p}(\mathbb{Z}_p)}$ has good reduction for $K^p\subset G(\mathbb{A}_f^p)$ small enough.   

Let $l$ be a prime different from $p$. 
In
\cite{MR1265563}, Blasius and Rogawski constructed an explicit polynomial $H_{G,\mu}(T)$  with coefficients in Hecke algebra 
$\overline{\mathbb{Q}_l}[G_{\mathbb{Z}_p}(\mathbb{Z}_p)\setminus G(\mathbb{Q}_p)/G_{\mathbb{Z}_p}(\mathbb{Z}_p)]$, and
conjectured that the Frobenius action $\Psi$ on 
\[
IH^*(X_{K^p}^{\text{min}}, \overline{\mathbb{Q}_l})
\]
(with its natural Hecke algebra module structure)
satisfying 
\begin{equation} \label{pquieopiwup}
H_{G,\mu}(\Psi) =0,
\end{equation}
where $X_{K^p}^{\text{min}}$ is the minimal compactification of $X_{K^p}$ and $IH$ is the intersection cohomology. This is a generalization of the classical Eichler-Shimura relation $T_p = F+V$ for modular curves. 

This conjecture has been established in many cases, but with intersection cohomology replaced by the usual (or compactly supported) cohomology of the noncompactified Shimura varieties,  see
\cite{MR1770603}, \cite{MR3270785},  \cite{2020arXiv200611745L}, and \cite{2021arXiv211010350W} for example. Most approaches to the Blasius-Rogawski conjecture follow the strategy of Faltings-Chai (\cite{FaltingsChai} chap. \rom{7}) by establishing the equation (\ref{pquieopiwup}) as algebraic correspondences. One exception is \cite{2021arXiv211010350W}, in which (\ref{pquieopiwup}) is established as cohomological correspondences by interpreting Hecke operators as excursion operators.

In \cite{2020arXiv200611745L}, it is explained how to deduce  (\ref{pquieopiwup}) for intersection cohomology from (\ref{pquieopiwup}) as algebraic correspondences. This cannot be directly applied to the setting of \cite{2021arXiv211010350W}. In this paper, we supply an alternative approach to establish  (\ref{pquieopiwup})
for intersection cohomology so we can deduce from \cite{2021arXiv211010350W} the Eichler-Shimura relation for intersection cohomology of (minimal compactifications of) Shimura varieties of Hodge type.

More precisely, we deduce the Blasius-Rogawski conjecture from a formal result on extending cohomological correspondences to the compactifications. 

\begin{theorem} \label{eouril}
Let $X$ be a smooth scheme over finite field $\mathbb{F}_q$, and $\overline{X}$ a proper scheme over $\mathbb{F}_q$ which contains $X$ as an open dense subscheme. Let $\mathbb{L}$ be a pure local system on $X$, and  $P(T)$ be a polynomial with coefficients being cohomological correspondences from $\mathbb{L}$ to itself. Moreover, we assume that the support of the coefficients are proper over $X$ under the right leg (the one with !-pullback).

Suppose that $\Psi$ is a cohomological correspondence from $\mathbb{L}$ to itself whose support is proper over $X$ under the right leg, and such that 
\[
P(\Psi) = 0,
\]
we refer to section \ref{fjdlseiq} for the potential subtle addition of cohomological correspondences. 
Then the coefficients of $P(T)$ and $\Psi$ naturally acts on $H^*(\overline{X}, \text{IC}(\mathbb{L}))$, and the relation 
\[
P(\Psi) = 0
\]
holds in the endormorphism ring of $H^*(\overline{X}, \text{IC}(\mathbb{L}))$. 
\end{theorem}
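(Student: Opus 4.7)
The plan is to reduce the statement about $H^*(\overline{X}, \text{IC}(\mathbb{L}))$ to the analogous statement about $H^*(X, \mathbb{L})$, where the action of cohomological correspondences is classical, and to identify $H^*(\overline{X}, \text{IC}(\mathbb{L}))$ as a subspace of $H^*(X, \mathbb{L})$ cut out by the weight filtration---a subspace automatically preserved by the Frobenius-equivariant endomorphisms coming from our correspondences.

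First I would recall that any cohomological correspondence $(Z, a, b, u: a^* \mathbb{L} \to b^! \mathbb{L})$ with $b$ proper induces an endomorphism $[u]$ of $H^*(X, \mathbb{L})$ via the standard pull-push $a^* \circ u \circ \text{tr}_b$, and that this assignment intertwines addition and composition of correspondences (taking care of the subtleties alluded to in section \ref{fjdlseiq}). Consequently the hypothesis $P(\Psi) = 0$ as cohomological correspondences passes to the identity $P([\Psi]) = 0$ in $\text{End}(H^*(X, \mathbb{L}))$.

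Next I would identify $H^*(\overline{X}, \text{IC}(\mathbb{L}))$ as a direct summand of $H^*(X, \mathbb{L})$ via purity. With $d = \dim X$, $w$ the weight of $\mathbb{L}$, and $j: X \hookrightarrow \overline{X}$ the open inclusion, Deligne's purity theorem applied to $R j_* \mathbb{L}[d]$ shows that each perverse cohomology sheaf ${}^{p}\mathcal{H}^i(R j_* \mathbb{L}[d])$ is pure on $\overline{X}$. By semisimplicity of pure perverse sheaves over $\overline{\mathbb{F}_q}$, $\text{IC}(\mathbb{L}) = j_{!*}(\mathbb{L}[d])$ is a direct summand of ${}^{p}\mathcal{H}^0(R j_* \mathbb{L}[d]) = {}^{p}j_* \mathbb{L}[d]$, and the perverse hypercohomology spectral sequence computing $H^*(\overline{X}, R j_* \mathbb{L}[d])$ degenerates for weight reasons. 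This exhibits the natural map $H^k(\overline{X}, \text{IC}(\mathbb{L})) \hookrightarrow H^{k+d}(X, \mathbb{L})$ as the inclusion of a direct summand---specifically, as the pure weight $k + d + w$ subspace, which is the lowest piece of the weight filtration (since $H^{k+d}(X, \mathbb{L})$ has weights $\geq k + d + w$ by smoothness of $X$).

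The conclusion is then automatic: $[\Psi]$ and the endomorphisms from the coefficients of $P$ commute with Frobenius, since all data are defined over $\mathbb{F}_q$, hence preserve each piece of the weight filtration and in particular the lowest piece $H^k(\overline{X}, \text{IC}(\mathbb{L}))$. This yields the asserted natural action on intersection cohomology, and the identity $P([\Psi]) = 0$ descends from $H^*(X, \mathbb{L})$ to this subspace. The main obstacle is the purity/BBD identification in the previous paragraph: for the non-proper $j$ and with only the hypothesis that $\mathbb{L}$ is a pure local system, the statement ``$\text{IC}$ cohomology is a weight-filtration piece of ordinary cohomology'' must be formulated and verified carefully; once this is in place, the remaining assertions follow formally. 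An alternative approach would be to try to extend $u$ to a morphism $\bar u: \bar a^* \text{IC}(\mathbb{L}) \to \bar b^! \text{IC}(\mathbb{L})$ on the closure $\overline{Z}$ of $Z$ in $\overline{X} \times \overline{X}$, but the sheaf-theoretic extension problem there appears harder than the weight-based argument.
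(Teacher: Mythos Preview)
Your argument has a genuine gap: the identification of $H^k(\overline{X},\text{IC}(\mathbb{L}))$ with the lowest-weight subspace of $H^{k+d}(X,\mathbb{L})$ is false in general, and the justification you give for it is incorrect. Deligne's purity theorem applies to \emph{proper} pushforwards; for the open immersion $j$ there is no reason for the perverse cohomology sheaves ${}^{p}\mathcal{H}^i(Rj_*\mathbb{L}[d])$ to be pure, and in fact the natural map $H^k(\overline{X},\text{IC}(\mathbb{L}))\to H^{k+d}(X,\mathbb{L})$ need not be injective. A minimal counterexample: take $\overline{X}$ an irreducible nodal curve over $\mathbb{F}_q$, $X$ its smooth locus, $\mathbb{L}=\overline{\mathbb{Q}_l}$, $d=1$. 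Then $\text{IC}(\overline{\mathbb{Q}_l})=\nu_*\overline{\mathbb{Q}_l}[1]$ for the normalization $\nu:\tilde{X}\to\overline{X}$, so $H^1(\overline{X},\text{IC})=H^2(\tilde{X})\cong\overline{\mathbb{Q}_l}(-1)$, whereas $H^2(X,\overline{\mathbb{Q}_l})=0$ since $X$ is a smooth affine curve. Hence the map is zero on a nonzero source, and $H^1(\overline{X},\text{IC})$ is certainly not a weight piece of $H^2(X)$. So you cannot define the action on intersection cohomology by restricting endomorphisms of $H^*(X,\mathbb{L})$.

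The route the paper takes is precisely the ``alternative approach'' you set aside as harder. One first extends each correspondence $c$ on $\mathbb{L}$ to a correspondence $j_{Z*}c$ on $j_*\mathbb{L}[d]$ supported on a compactification $\overline{Z}$ of its support (Fujiwara's lemma gives existence and uniqueness once the right leg is proper); then one applies Morel's weight truncation $\omega_{\le a}$, with $a$ the weight of $\mathbb{L}[d]$, \emph{at the sheaf level}. Morel's theorem identifies $\omega_{\le a}j_*\mathbb{L}[d]$ with $\text{IC}(\mathbb{L})$, and a separate lemma guarantees that $\omega_{\le a}$ sends correspondences on $j_*\mathbb{L}[d]$ uniquely to correspondences on $\text{IC}(\mathbb{L})$. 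The point is that both operations $j_{Z*}$ and $\omega_{\le a}$ are checked to commute with composition and with addition of correspondences; therefore $P(\Psi)=0$ survives to a relation among correspondences on $\text{IC}(\mathbb{L})$, and hence on $H^*(\overline{X},\text{IC}(\mathbb{L}))$. The weight truncation here is on $D^b_m(\overline{X})$, not on the cohomology vector spaces, and this is exactly what circumvents the failure of injectivity above.
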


\begin{remark}
The restriction on the base being finite fields comes from the use of Morel's weight t-structure in the proof. It is possible to generalize the result to the base of number field or complex numbers by applying the analogous formalism of weight t-structures in \cite{morel2019mixed} and  \cite{nairmixed}. 
\end{remark}

As the Eichler-Shimura relation in \cite{2021arXiv211010350W} is established as cohomological correspondences, we have the desired corollary. 

\begin{corollary} \label{ES}
The Eichler-Shimura relation holds on the intersection cohomology of Shimura varieties of Hodge type. 
\end{corollary}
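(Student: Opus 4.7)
The plan is to apply Theorem~\ref{eouril} directly, treating the Shimura setting as a special case. I would take $X = X_{K^p}$ (the good reduction at $p$ of the Shimura variety over the residue field of the chosen place of the reflex field), $\overline{X} = X_{K^p}^{\text{min}}$ its minimal compactification, and $\mathbb{L} = \overline{\mathbb{Q}_l}$. The polynomial $P(T)$ is the Blasius-Rogawski polynomial $H_{G,\mu}(T)$, whose coefficients I would realize as cohomological correspondences coming from the usual Hecke correspondences at level $K^p$; these are finite (hence proper) over both copies of $X$, so the right-leg properness hypothesis of Theorem~\ref{eouril} is automatic. The endomorphism $\Psi$ is the geometric Frobenius, represented by its graph, which is again proper over $X$ on both sides.

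With these choices, the relation $P(\Psi) = 0$ in the space of cohomological correspondences is exactly the Eichler-Shimura relation as cohomological correspondences established in \cite{2021arXiv211010350W} via excursion operators for Shimura varieties of Hodge type. Feeding this into Theorem~\ref{eouril} immediately produces $H_{G,\mu}(\Psi) = 0$ on $H^*(X_{K^p}^{\text{min}}, \text{IC}(\overline{\mathbb{Q}_l})) = IH^*(X_{K^p}^{\text{min}}, \overline{\mathbb{Q}_l})$, which is the statement of the Blasius-Rogawski conjecture on intersection cohomology. Note that the formal mechanism of Theorem~\ref{eouril} is what replaces the Faltings-Chai style passage from algebraic to cohomological correspondences used in \cite{2020arXiv200611745L}, and is needed precisely because the input from \cite{2021arXiv211010350W} is cohomological rather than algebraic.

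The only point requiring care, and the one I expect to be the main verification, is to check that the Hecke and Frobenius actions on $IH^*$ produced by the cohomological correspondence formalism in Theorem~\ref{eouril} coincide with the classical actions entering the statement of the Blasius-Rogawski conjecture. For Frobenius this is essentially tautological, since the graph of Frobenius induces the usual Frobenius endomorphism on cohomology. For Hecke operators, one has to verify that the correspondence-induced action, after applying $j_{!*}$ to $\overline{\mathbb{Q}_l}$ on $X_{K^p}$, recovers the standard Hecke action on $IH^*(X_{K^p}^{\text{min}}, \overline{\mathbb{Q}_l})$; this should follow from functoriality of the intermediate extension under the finite étale morphisms appearing in the Hecke correspondence diagram, but it is the place where two \emph{a priori} different descriptions of the same action must be matched.
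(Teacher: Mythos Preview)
Your proposal is correct and follows essentially the same route as the paper: apply Theorem~\ref{eouril} to the Shimura datum with $\mathbb{L}=\overline{\mathbb{Q}_l}$, using the Eichler--Shimura relation of \cite{2021arXiv211010350W} as the input $P(\Psi)=0$, and then verify that the resulting actions on $IH^*$ agree with the classical ones. The only minor difference is in how you justify that last compatibility: you invoke functoriality of $j_{!*}$ under the finite \'etale Hecke legs, whereas the paper argues more directly via the uniqueness statements of Lemmas~\ref{Fujiwa} and~\ref{oeiuroeiui}, which pin down $\omega_{\leq a}j_*$ of each correspondence as the unique extension restricting correctly to $X$.
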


\begin{remark}
The Eichler-Shimura relation in \cite{2021arXiv211010350W} is established with constant coefficients. If we can prove the Eichler-Shimura relation with automorphic coefficient systems on open Shimura varieties (as cohomological correspondences), then we can also deduce the relation for intersection cohomology with automorphic coefficient systems. This has been proved in certain special cases in \cite{2020arXiv200611745L}, and our result applies to their contexts. 
\end{remark}

\subsection*{Acknowledgement}
I would like to thank Cheng Shu and Yifeng Liu for refreshing my interests on the questions considered in this paper.

\section{Reivew of Cohomological Correspondences} 

Let $X$, $Y$ and $Z$ be separated schemes of finite type over a field $k$, together with a diagram 
\[
\begin{tikzcd}
& Z \arrow[rd,"p_2"] \arrow[ld,"p_1"] &
\\
X & & Y
\end{tikzcd}
\]
Let $\mathcal{F} \in D(X, \overline{\mathbb{Q}_l})$ and $\mathcal{G} \in D(Y,\overline{\mathbb{Q}_l})$
where $l$ is different from the characteristic of $k$. Then a cohomological correspondence from $\mathcal{F}$ to $\mathcal{G}$ supported on $Z$ is a 
morphism
\[
c: p_1^* \mathcal{F} \longrightarrow p_2^! \mathcal{G}
\]
in $D(Z, \overline{\mathbb{Q}_l})$. We can  compose cohomological correspondences, provided the domain and codamain being compatible.

\subsection{Pushforward of cohomological correspondences}

Now given a commutative diagram 
\[
\begin{tikzcd}
& Z_1 \arrow[rd,"p_2"] \arrow[ld,"p_1"] \arrow[dd,"f"]
&
\\
X_1 \arrow[dd,"h"]& & Y_1 \arrow[dd,"g"]
\\
& Z_2 \arrow[rd,"q_2"] \arrow[ld,"q_1"] &
\\
X_2 & & Y_2
\end{tikzcd}
\]
and let $c: p_1^* \mathcal{F} \longrightarrow p_2^! \mathcal{G}$ be a cohomological correspondence supported on $Z_1$. 

Assume that

$\bullet$ \textit{$p_2$ and $q_2$ are proper,} \\
then we can pushforward $c$ along the diagram to obtain the cohomological correspondence
\[
f_* c: q_1^*h_*\mathcal{F} \longrightarrow  q_2^! g_* \mathcal{G}
\]
supported on $Z_2$. It is defined as the composition
\[
q_1^*h_*\mathcal{F} \longrightarrow f_*p_1^* \mathcal{F} \overset{f_* c}{\longrightarrow }
f_*p_2^!\mathcal{G} \longrightarrow 
q_2^! g_* \mathcal{G},
\]
where the first map is the base change map, and the last map is the adjoint of the map 
\[
q_{2!}f_*p_2^!\mathcal{G} = q_{2*}f_*p_2^!\mathcal{G} =g_*p_{2*}p_2^!\mathcal{G} = g_*p_{2!}p_2^!\mathcal{G}
\longrightarrow g_* \mathcal{G}.
\]
The identities above makes use of that $p_2$ and $q_2$ are proper, and the last arrow is the counit map. 

It is easy to check that $(f \circ f')_* c = f_*(f'_*c)$, provided that the correspondences satisfy our hypothesis on properness of the right leg. Moreover, we can also see directly that pushforward commutes with composition of cohomological correspondences, again under the properness hypothesis. 

\begin{remark} \label{fjlsqw}
There are two other cases where one can pushforward cohomological correspondences, namely, 

1) $f$ and $g$ are proper, 

2) The right square is Cartesian. \\
It is easy to see that pushforward in this two cases still preserves composition. Moreover, it is functorial. 
\end{remark}

\subsection{Extension of cohomological correspondences} \label{928309isj}
Let 
\[
c: p_1^* \mathcal{F} \longrightarrow p_2^! \mathcal{G}
\]
be a cohomological correspondence supported on
\[
\begin{tikzcd}
& Z \arrow[rd,"p_2"] \arrow[ld,"p_1"] &
\\
X & & X.
\end{tikzcd}
\]
 Let $\overline{X}$  be a proper scheme over  $k$  such that $X \subset \overline{X}$ is an open dense subscheme. Then we can choose a compactification $\overline{Z}$ of $Z$, i.e. $Z \subset \overline{Z}$ open dense and $\overline{Z}$ is proper, together with two morphism $q_1, q_2: \overline{Z} \rightarrow \overline{X}$ fitting into a commutative diagram
\[
\begin{tikzcd}
& Z \arrow[rd,"p_2"] \arrow[ld,"p_1"] \arrow[dd,hook,"j_{Z}"]
&
\\
X \arrow[dd,hook,"j_X"]& & X \arrow[dd,hook,"j_X"]
\\
& \overline{Z} \arrow[rd,"q_2"] \arrow[ld,"q_1"] &
\\
\overline{X} & & \overline{X}
\end{tikzcd}
\]
For example, we can choose an arbitrary compactification $Z^*$ of $Z$ (exists by Nagata), and take $\overline{Z}$ to be the closure of the image of $Z$ in $Z^* \times \overline{X} \times \overline{Y}$. 
\begin{lemma} \label{Fujiwa}
(\cite{1997InMat.127..489F} lemma 1.3.1) Assume that $p_2$ is proper (so that $j_{Z*} c$ is well-defined), then $j_{Z*} c$ is  the unique cohomological correspondence  from $j_{X*}\mathcal{F}$ to $j_{X*} \mathcal{G}$ supported on $\overline{Z}$ whose restriction on $Z$ is $c$.  
\end{lemma}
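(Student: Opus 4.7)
The plan is to derive both existence and uniqueness of the extension from a single adjunction bijection between the relevant $\mathrm{Hom}$ groups, with the properness hypothesis on $p_2$ entering through a geometric Cartesianness claim.

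The first step will be to observe that, under the hypothesis that $p_2$ is proper, the square
\[
\begin{tikzcd}
Z \arrow[r, hook, "j_Z"] \arrow[d, "p_2"] & \overline{Z} \arrow[d, "q_2"] \\
X \arrow[r, hook, "j_X"] & \overline{X}
\end{tikzcd}
\]
is in fact Cartesian. Setting $W := q_2^{-1}(X) \subseteq \overline{Z}$, the inclusion $Z \hookrightarrow W$ is an open immersion, and because $p_2$ factors as $Z \hookrightarrow W \to X$ with $W \to X$ separated, this open immersion is proper, hence a closed immersion. Density of $Z$ in $\overline{Z}$ (and therefore in the open subset $W$) then forces $Z = W$.

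With Cartesianness in hand, I will establish a natural bijection
\[
j_Z^* : \mathrm{Hom}_{\overline{Z}}\!\bigl(q_1^* j_{X*}\mathcal{F},\; q_2^! j_{X*}\mathcal{G}\bigr) \xrightarrow{\;\sim\;} \mathrm{Hom}_{Z}\!\bigl(p_1^*\mathcal{F},\; p_2^!\mathcal{G}\bigr).
\]
Applying the $(q_{2!},q_2^!)$ adjunction and then the $(j_X^*,j_{X*})$ adjunction rewrites the left-hand side as $\mathrm{Hom}_X(j_X^* q_{2!} q_1^* j_{X*}\mathcal{F},\, \mathcal{G})$. Proper base change along the Cartesian square (which applies because $q_2$ is proper, $\overline{Z}$ being proper over $k$) yields $j_X^* q_{2!} \cong p_{2!} j_Z^*$; combined with $j_Z^* q_1^* = p_1^* j_X^*$ and $j_X^* j_{X*} = \mathrm{id}$, the inner object collapses to $p_{2!}p_1^*\mathcal{F}$, and a final use of the $(p_{2!},p_2^!)$ adjunction produces the target $\mathrm{Hom}_Z(p_1^* \mathcal{F},\, p_2^! \mathcal{G})$.

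The last step is to verify, by a diagram chase through the unit/counit and base change natural transformations, that the composite isomorphism coincides with literal restriction by $j_Z^*$, and that $j_{Z*}c$ as constructed in subsection 2.1 is exactly the preimage of $c$ under this bijection; this amounts to recognizing that the adjoint maps used in the definition of $j_{Z*}c$ are the same ones traced through the $\mathrm{Hom}$-bijection. I expect the main obstacle to be this bookkeeping of compatibilities rather than any deep conceptual issue, since the genuinely nontrivial geometric content is concentrated in the Cartesianness step, which is precisely where the hypothesis that $p_2$ is proper is used.
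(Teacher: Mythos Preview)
The paper does not supply its own proof of this lemma; it merely cites Fujiwara's paper (\cite{1997InMat.127..489F}, lemma~1.3.1) and moves on. So there is no in-paper argument to compare against.

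That said, your proposal is sound and is essentially the standard argument one finds for this statement. The Cartesianness of the right-hand square is exactly the geometric input that makes the properness hypothesis on $p_2$ do work, and your verification of it is correct: $Z \hookrightarrow W := q_2^{-1}(X)$ is an open immersion which is proper (since $p_2$ is proper and $W \to X$ is separated), hence clopen, and density of $Z$ in $\overline{Z}$ forces $Z = W$. Once the square is Cartesian, your chain of adjunctions and proper base change produces the claimed bijection of $\mathrm{Hom}$-sets. Two small points worth making explicit in a write-up: first, to identify $j_Z^* q_2^! j_{X*}\mathcal{G}$ with $p_2^! \mathcal{G}$ you are using that $j_Z$ and $j_X$ are open immersions (so $j_Z^* = j_Z^!$ and $j_X^* = j_X^!$), whence $j_Z^* q_2^! = p_2^! j_X^*$ by functoriality of $(-)^!$; second, the compatibility between the adjunction bijection and the explicit pushforward $j_{Z*}c$ defined in subsection~2.1 really does come down to unwinding the unit/counit maps, and while routine it deserves at least a sentence of justification rather than being left implicit.
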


We can also see that the extension is independent of the choice of $\overline{Z}$ in an appropriate sense. 

\begin{lemma} \label{uniqueFuj}
The pushforward of $j_{Z*}c$ to $\overline{X} \times \overline{X}$ along
\[
\begin{tikzcd}
& \overline{Z} \arrow[dd,"q_1 \times q_2"] \arrow[rd,"q_2"] \arrow[ld,"q_1"] &
\\
\overline{X} \arrow[dd,equal] & & \overline{X} \arrow[dd,equal]
\\
& \overline{X}\times \overline{X} \arrow[rd,""] \arrow[ld,""] &
\\
\overline{X} & & \overline{X}
\end{tikzcd}
\]
is independent of the choice of $\overline{Z}$, where the pushforward exists since every arrow in the diagram is proper. 

In particular, the homomorphism on the cohomology groups 
\[
H^*(\overline{X},j_{X*}\mathcal{F}) \rightarrow H^*(\overline{X},j_{X*}\mathcal{G})
\]
induced from $j_{Z*}c$
is independent of the choice of $\overline{Z}$. 
\end{lemma}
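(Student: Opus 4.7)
My plan is to reduce to a common refinement of $\overline{Z}_1$ and $\overline{Z}_2$ and then apply Fujiwara's uniqueness (Lemma \ref{Fujiwa}) together with the functoriality of pushforward. Let $\widetilde{c}_i$ denote the extension of $c$ to $\overline{Z}_i$ produced by Lemma \ref{Fujiwa}. Given $\overline{Z}_1$ and $\overline{Z}_2$ fitting in the diagram of \S\ref{928309isj}, I form $\overline{Z}_3$ as the scheme-theoretic closure of the diagonal image of $Z$ inside $\overline{Z}_1 \times_{\overline{X}\times\overline{X}} \overline{Z}_2$. Then $\overline{Z}_3$ is proper and contains $Z$ as an open dense subscheme, and the two projections restrict to proper maps $\pi_i : \overline{Z}_3 \to \overline{Z}_i$ which are the identity on $Z$ and satisfy $q_1^i\circ\pi_i = q_1^3$, $q_2^i\circ\pi_i = q_2^3$ (with self-explanatory notation for the legs of each compactification).

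The core step is to establish
\[
(\pi_i)_*\widetilde{c}_3 \;=\; \widetilde{c}_i
\]
as cohomological correspondences from $j_{X*}\mathcal{F}$ to $j_{X*}\mathcal{G}$ supported on $\overline{Z}_i$. Both sides are extensions of $c$, so by Fujiwara's uniqueness it suffices to verify that the left-hand side restricts to $c$ on $Z$. For this, I use that the square
\[
\begin{tikzcd}
Z \arrow[r, hook, "j_Z^3"] \arrow[d, equal] & \overline{Z}_3 \arrow[d, "\pi_i"] \\
Z \arrow[r, hook, "j_Z^i"] & \overline{Z}_i
\end{tikzcd}
\]
is Cartesian (since $\pi_i$ is an isomorphism over $Z$), so proper base change gives commutation of $(j_Z^i)^*$ with $(\pi_i)_*$ on the $*$-leg and the dual identity for the $!$-leg. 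Unwinding the definition of pushforward, the restriction of $(\pi_i)_*\widetilde{c}_3$ to $Z$ is therefore the restriction of $\widetilde{c}_3$, namely $c$.

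Once this is in hand, the functoriality $(g\circ f)_* = g_*f_*$ recorded after the definition of pushforward yields
\[
(q_1^i\times q_2^i)_*\widetilde{c}_i \;=\; (q_1^i\times q_2^i)_* (\pi_i)_* \widetilde{c}_3 \;=\; (q_1^3\times q_2^3)_* \widetilde{c}_3,
\]
so the pushforwards from $\overline{Z}_1$ and $\overline{Z}_2$ both coincide with the one from $\overline{Z}_3$. The consequence for $H^*(\overline{X}, j_{X*}\mathcal{F}) \to H^*(\overline{X}, j_{X*}\mathcal{G})$ is then immediate, as this map is induced by a further proper pushforward of the correspondence on $\overline{X}\times\overline{X}$. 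I expect the main obstacle to be pure bookkeeping: verifying that the base change natural transformations for $*$ and $!$, together with the counit appearing in the definition of pushforward, actually assemble into the claimed equality of restrictions as morphisms, rather than just a canonical isomorphism of source and target. With Lemma \ref{Fujiwa} in hand this reduces to naturality for a single Cartesian square, but writing it out precisely still requires some attention.
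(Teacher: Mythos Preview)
Your argument is correct, but it takes a longer route than necessary. The paper's proof is a one-liner: by functoriality of pushforward one has
\[
(q_1\times q_2)_*(j_{Z*}c) \;=\; ((q_1\times q_2)\circ j_Z)_* c \;=\; ((j_X\circ p_1)\times(j_X\circ p_2))_* c,
\]
and the right-hand side visibly does not mention $\overline{Z}$ at all. So there is no need to compare two compactifications via a common refinement; the dependence on $\overline{Z}$ simply disappears once you collapse the two-step pushforward into a single one.

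Your approach---building $\overline{Z}_3$ dominating both $\overline{Z}_1$ and $\overline{Z}_2$, then invoking Lemma~\ref{Fujiwa} to identify $(\pi_i)_*\widetilde{c}_3$ with $\widetilde{c}_i$---also works, and the Cartesian-square claim you need (that $\pi_i^{-1}(Z)=Z$) is indeed true: the graph of $j_Z^2$ is already closed in $Z\times\overline{Z}_2$ since $\overline{Z}_2$ is separated, so the closure of $Z$ in $\overline{Z}_1\times\overline{Z}_2$ meets $Z\times\overline{Z}_2$ exactly in $Z$. In fact, you could shortcut your own argument: functoriality alone gives $(\pi_i)_*\widetilde{c}_3 = (\pi_i)_*(j_Z^3)_*c = (j_Z^i)_*c = \widetilde{c}_i$, so Lemma~\ref{Fujiwa} and the base-change verification are not really needed. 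Once you are using functoriality that freely, you are one step away from the paper's direct argument. The refinement method would be the right tool if the intermediate extension $j_{Z*}c$ were defined by some universal property rather than as a pushforward, but here it is literally a pushforward, so composing pushforwards is the cleanest path.
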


\begin{proof}
This is a direct consequence of the functoriality of pushforward, i.e. $(q_1\times q_2)_*(j_{Z*}c) = ((q_1 \times q_2) \circ j_Z)_* c = ((j_X \circ p_1) \times (j_X \circ p_2))_* c$. 
\end{proof}

\subsection{Addition of cohomological correspondences} \label{fjdlseiq}
Let
\[
c_1: p_1^* \mathcal{F} \longrightarrow p_2^! \mathcal{G}
\]
be a cohomological correspondence supported on 
\[
\begin{tikzcd}
& Z_1 \arrow[rd,"p_2"] \arrow[ld,"p_1"] &
\\
X & & Y, 
\end{tikzcd}
\]
and 
\[
c_2: q_1^* \mathcal{F} \longrightarrow q_2^! \mathcal{G}
\]
a cohomological correspondence supported on 
\[
\begin{tikzcd}
& Z_2 \arrow[rd,"q_2"] \arrow[ld,"q_1"] &
\\
X & & Y,
\end{tikzcd}
\]
and we assume as usual that $p_2$ and $q_2$ are proper. 
One cannot naively define the addition of $c_1$ and $c_2$ since they are defined on different support. However, $c_1$ and $c_2$ induce two morphism from $H^{*}(X,\mathcal{F})$ to $H^*(Y, \mathcal{G})$, and clearly we can add them. We can upgrade this addition to a cohomological correspondence as follows. 

First note that we have a natural factorization
\[
\begin{tikzcd}
& & Z_1 \arrow[rrdd,"p_2"] \arrow[lldd,"p_1"] \arrow[d,"p_1 \times p_2"]
& &
\\
 & & X\times Y \arrow[rrd,"a_2"] \arrow[lld,"a_1"] & &
\\
X & & & & Y. 
\end{tikzcd}
\]
Since $a_2$ is not necessarily proper, it does not directly fall into the pushforward formalism. However, we observe that the image of $p_1 \times p_2$ is proper over $Y$ by properness of $p_2$, so $p_1 \times p_2$ factorizes through a closed subscheme $W$ of $X\times Y$ that is proper over $Y$, so we can pushforward $c_1$ to $W$. Moreover, we can pushforward cohomological correspondence from $W$ to $X\times Y$ by 1) of remark \ref{fjlsqw}. Thus we still have a pushforward of $c_1$ to $X\times Y$, which we again denote by $(p_1 \times p_2)_* c_1$. Similarly, we have  $(q_1 \times q_2)_*c_2$ on $X\times Y$, and we define
\[
c_1 + c_2 := (p_1 \times p_2)_* c_1+ (q_1 \times q_2)_*c_2
\]
as a cohomological correspondence supported on $X\times Y$. Clearly this induces the expected addition of morphism on cohomology groups. 

\begin{remark}
If $Z_1=Z_2$ and $p_i=q_i$, then there is a potential conflict of notation as we can also define $c_1+ c_2$ directly as addition of morphisms of sheaves on $Z_1$. This does not create real conflict as its pushforward to $X\times Y$ coincide with our definition, since pushforward commutes with addition of morphisms. 
\end{remark}

We now show that addition as we just defined commutes with pushforward. Suppose that we have two pushforward diagrams
\[
\begin{tikzcd}
& Z_1 \arrow[rd,"p_2"] \arrow[ld,"p_1"] \arrow[dd,"f_1"]
& 
& & & 
& Z_2 \arrow[rd,"q_2"] \arrow[ld,"q_1"] \arrow[dd,"f_2"]
& 
\\
X_1 \arrow[dd,"h"]& & Y_1 \arrow[dd,"g"] 
& & &
X_1 \arrow[dd,"h"]& & Y_1 \arrow[dd,"g"] 
\\
& W_1 \arrow[rd,"a_2"] \arrow[ld,"a_1"] & 
& & &
& W_2 \arrow[rd,"b_2"] \arrow[ld,"b_1"] & 
\\
X_2 & & Y_2
& & &
X_2 & & Y_2,
\end{tikzcd}
\]
with $p_2$, $a_2$, $q_2$ and $b_2$ proper, 
and let 
\[
c_1: p_1^* \mathcal{F} \longrightarrow p_2^! \mathcal{G}
\]
\[
(resp. \ \ 
c_2: q_1^* \mathcal{F} \longrightarrow q_2^! \mathcal{G})
\]
be a cohomological correspondence supported on $Z_1$ (resp. $Z_2$). 

\begin{lemma} \label{additionlemma}
We have canonical identification 
\[
f_{1*} c_1 + f_{2*} c_2 \cong (h \times g)_* (c_1 + c_2)
\]
\end{lemma}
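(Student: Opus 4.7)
The plan is to reduce the identity to the additivity and functoriality of pushforward of cohomological correspondences. Unfolding the construction of Section \ref{fjdlseiq} on the two sides, we have on $X_1 \times Y_1$
\[
c_1 + c_2 = (p_1 \times p_2)_* c_1 + (q_1 \times q_2)_* c_2,
\]
and on $X_2 \times Y_2$
\[
f_{1*} c_1 + f_{2*} c_2 = (a_1 \times a_2)_* f_{1*} c_1 + (b_1 \times b_2)_* f_{2*} c_2.
\]
Pushforward of cohomological correspondences (in any of the three cases of Remark \ref{fjlsqw}) is additive in morphisms of sheaves, so applying $(h \times g)_*$ to the first line distributes over the sum.

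It therefore suffices to establish the two identities
\[
(h \times g)_* (p_1 \times p_2)_* c_1 = (a_1 \times a_2)_* f_{1*} c_1
\]
and the symmetric statement for $c_2$. Each is an instance of the functoriality $(u \circ v)_* = u_* v_*$ of pushforward: both sides push $c_1$ to $X_2 \times Y_2$ along a composite map $Z_1 \to X_2 \times Y_2$, and these two composites coincide by the commutativity relations $h \circ p_1 = a_1 \circ f_1$ and $g \circ p_2 = a_2 \circ f_1$ read off from the hypothesis diagram. Adding the two identities and comparing with the two displays above yields the lemma.

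The only nontrivial bookkeeping, and what I expect to be the main (but routine) obstacle, is verifying that every intermediate pushforward actually falls into the pushforward formalism of Section 2.1, i.e. that each intermediate support admits a closed subscheme proper over the appropriate right-leg target. Along the route through $X_1 \times Y_1$ this is supplied by properness of $p_2$ (producing a closed subscheme $V_1 \subset X_1 \times Y_1$ proper over $Y_1$, which carries $(p_1 \times p_2)_* c_1$) and then by properness of $a_2$, which forces the image of $V_1$ in $X_2 \times Y_2$ to lie in a closed subscheme proper over $Y_2$; along the route through $W_1$ properness of $a_2$ is used directly. With these auxiliary subschemes chosen compatibly, the functoriality and composition-preservation statements recalled in Section 2.1 and Remark \ref{fjlsqw} apply at each stage and give the desired identities.
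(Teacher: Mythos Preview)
Your argument is correct and follows essentially the same route as the paper: unfold both sides using the definition of addition, invoke functoriality of pushforward via the commutative squares $(h\times g)\circ(p_1\times p_2)=(a_1\times a_2)\circ f_1$ (and similarly for $c_2$), and then use additivity of pushforward to recombine. Your remarks on the properness bookkeeping also match the paper's justification that $(h\times g)_*(p_1\times p_2)_*c_1$ is well-defined.
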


\begin{proof}
An equivalent way to write the above two diagrams is the following two commutative diagrams 
\[
\begin{tikzcd}
Z_1 \arrow[r,"f_1"] \arrow[d,"p_1 \times p_2"]
& W_1 \arrow[d,"a_1\times a_2"] 
& & & 
Z_2 \arrow[r,"f_2"] \arrow[d,"q_1 \times q_2"]
& W_2 \arrow[d,"b_1\times b_2"] 
\\
X_1 \times Y_1 \arrow[r,"h\times g"] & X_2\times Y_2
& & &
X_1 \times Y_1 \arrow[r,"h\times g"] & X_2\times Y_2.
\end{tikzcd}
\]
Now we compute that
\[
f_{1*} c_1 + f_{2*} c_2 = (a_1\times a_2)_*f_{1*} c_1 + (b_1 \times b_2)_*  f_{2*}c_2= (h\times g)_*(p_1\times p_2)_* c_1 + (h\times g)_* (q_1 \times q_2)_* c_2
\]
\[
=(h\times g)_*((p_1\times p_2)_* c_1 +  (q_1 \times q_2)_* c_2)
= (h\times g)_* (c_1 + c_2).
\]
Note that 
$ (h\times g)_*(p_1\times p_2)_* c_1$ is defined since $(p_1\times p_2)_* c_1$ is supported on a closed subscheme $D$ of $X_1 \times Y_1$ that is proper over $Y_1$, and the image of  $D$ under  $h\times g$ is contained in a closed subscheme of $X_2 \times Y_2$ that is proper over $Y_2$. This follows from the properness of $p_2$ and $a_2$. Similarly for $(h\times g)_* (q_1 \times q_2)_* c_2$.  
\end{proof}

In particular, we see that the extension of cohomological correspondences in section \ref{928309isj} commutes with addition. 

\section{Morel's weight t-structures}

The weight t-structures were originally introduced by Morel in \cite{MR2350050} to study Galois representations coming from intersection cohomology of Shimura varieties, see \cite{MR2862060} and \cite{morel2010-lq}. A different motivic application of weight t-structures to noncompact Shimura varieties can be found in \cite{wu_2021}. 

Let $k=\mathbb{F}_q$ be a finite field, and $X$ be a separated scheme of finite type over $k$. Let $D_m^b(X, \overline{\mathbb{Q}}_l)$ be the category of mixed $l$-adic complexes in the sense of \cite{MR751966} 5.1.5. The weight t-structure on $D_m^b(X, \overline{\mathbb{Q}}_l)$ consists of full subcategories $(^{\omega}D^{\leq a}, ^{\omega}D^{\geq a+1})$  of $D_m^b(X, \overline{\mathbb{Q}}_l)$ for $a \in \mathbb{Z}$,
where $^{\omega}D^{\leq a}$ (resp. $^{\omega}D^{\geq a+1}$) 
is the category of complexes $K$ whose perverse cohomology groups $^pH^iK$ have weights $\leq a$ (resp. $\geq a+1$) for all $i$. In particular, we have functors $\omega_{\leq a} : D_m^b(X, \overline{\mathbb{Q}}_l) \longrightarrow ^{\omega}D^{\leq a}$, 
$\omega_{\geq a+1}: D_m^b(X, \overline{\mathbb{Q}}_l) \longrightarrow ^{\omega}D^{\geq a+1}$ such that for any $K \in D_m^b(X, \overline{\mathbb{Q}}_l)$ we have a distinguished triangle
\[
\omega_{\leq a} K \longrightarrow K \longrightarrow \omega_{\geq a+1}K \longrightarrow \cdot [1].
\]

A key feature of the weight t-structure is that we can characterize intersection sheaves easily. 

\begin{theorem}
(\cite{MR2350050} Theorem 3.1.4.) Let 
$j: U \hookrightarrow X$ be a nonempty open immersion of separated schemes of finite type over $k$, and $K$ is a perverse sheaf pure of weight $a$ on $U$, then the canonical maps
\[
\omega_{\geq a} j_!K \longrightarrow j_{!*}K 
\longrightarrow 
\omega_{\leq a} j_* K 
\]
are isomorphisms. 
\end{theorem}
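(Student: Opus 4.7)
The plan is to prove the first isomorphism $\omega_{\geq a} j_! K \xrightarrow{\sim} j_{!*} K$ and to deduce the second by Verdier duality. Three inputs will drive the argument: (i) by Deligne's Weil II, $j_! K$ has weights $\leq a$ and $j_* K$ has weights $\geq a$, so $j_!K \in {}^{\omega}D^{\leq a}$ and $j_*K \in {}^{\omega}D^{\geq a}$; (ii) by the BBD purity theorem, $j_{!*} K$ is pure of weight $a$, and hence lies in both halves of the weight t-structure; and (iii) the surjection $j_! K \twoheadrightarrow j_{!*} K$ in perverse sheaves has kernel $M$ a perverse sheaf supported on $Z := X \setminus U$. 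From (ii), the composite $j_! K \to j_{!*} K$ factors uniquely through the canonical map $\omega_{\geq a} j_! K \to j_{!*} K$, and symmetrically the map $j_{!*}K \to \omega_{\leq a}j_*K$ exists; these are the maps we want to prove are isomorphisms.

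The heart of the proof is to show that $M$ has weights $\leq a-1$. For this I would invoke the functorial weight filtration $W_{\bullet}$ on mixed perverse sheaves, which on perverse objects records Morel's weight t-structure. Applying the exact functor $\mathrm{Gr}_a^W$ to the short exact sequence $0 \to M \to j_!K \to j_{!*}K \to 0$ yields
\[
0 \to \mathrm{Gr}_a^W(M) \to \mathrm{Gr}_a^W(j_!K) \to j_{!*}K \to 0,
\]
using $\mathrm{Gr}_a^W(j_{!*}K) = j_{!*}K$ by purity. Since $j_!K$ has weights $\leq a$, the middle term is $j_!K/W_{a-1}(j_!K)$, a perverse quotient of $j_!K$ that is pure of weight $a$ and hence semisimple. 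Now $j_!K$ admits no nonzero perverse quotient supported on $Z$: dually, $j_*K$ admits no nonzero perverse subobject supported on $Z$, which is immediate from $\mathrm{Hom}(i_*F, j_*K) = \mathrm{Hom}(j^*i_*F, K) = 0$. Hence $\mathrm{Gr}_a^W(j_!K)$ has no perverse quotient supported on $Z$; by semisimplicity, every perverse subobject supported on $Z$ is then also a direct summand, hence such a quotient, hence zero. Applying this to $\mathrm{Gr}_a^W(M)$, which is supported on $Z$, forces $\mathrm{Gr}_a^W(M)=0$ and so $M$ has weights $\leq a-1$.

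This gives $M \subset W_{a-1}(j_!K)$. The reverse inclusion is automatic: the composite $W_{a-1}(j_!K) \hookrightarrow j_!K \to j_*K$ is a morphism from ${}^{\omega}D^{\leq a-1}$ to ${}^{\omega}D^{\geq a}$ and therefore vanishes by the t-structure axiom, so $W_{a-1}(j_!K) \subset \ker(j_!K \to j_*K) = M$. Thus $M = W_{a-1}(j_!K)$, and
\[
j_{!*}K = j_!K/M = j_!K/W_{a-1}(j_!K) = \mathrm{Gr}_a^W(j_!K) \cong \omega_{\geq a}(j_!K),
\]
where the last identification comes from the defining triangle $\omega_{\leq a-1}j_!K \to j_!K \to \omega_{\geq a}j_!K$ applied to $j_!K \in {}^{\omega}D^{\leq a}$. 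The second isomorphism then follows by Verdier duality, using $\mathbb{D}(j_!K) = j_*\mathbb{D}(K)$ and $\mathbb{D}(j_{!*}K) = j_{!*}\mathbb{D}(K)$, together with the fact that $\mathbb{D}$ swaps the two halves of the weight t-structure after negation of weights.

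The principal obstacle is the implicit input used throughout: that $\omega_{\leq \bullet}$ restricted to mixed perverse sheaves is a genuine exact filtration $W_{\bullet}$, making $\mathrm{Gr}_a^W$ well-defined and exact. This compatibility is the substance of Morel's construction of the weight t-structure; once it is in hand, the remainder of the argument, as sketched, is essentially formal.
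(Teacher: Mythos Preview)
The paper does not prove this theorem; it is quoted verbatim from Morel \cite{MR2350050}, Theorem 3.1.4, and is used here only as input. There is therefore no ``paper's own proof'' to compare against.

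That said, your argument is correct and is essentially the one Morel gives. The key identifications are exactly the ones you isolate: $j_!K$ has weights $\le a$ and $j_*K$ has weights $\ge a$ by \cite{MR751966}; $j_{!*}K$ is pure of weight $a$ by the decomposition theorem; and the kernel $M$ of $j_!K \twoheadrightarrow j_{!*}K$ is supported on the boundary. Your weight-filtration computation showing $M = W_{a-1}(j_!K)$ is the heart of the matter, and the step you flag as the ``principal obstacle'' --- that on mixed perverse sheaves the truncation $\omega_{\le b}$ agrees with the BBD weight filtration $W_b$, so that $\mathrm{Gr}^W_\bullet$ is exact --- is precisely the content of Morel's construction of the weight t-structure (this is \cite{MR2350050}, Proposition 3.1.1 together with \cite{MR751966}, 5.3.5--5.3.6). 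Once that is granted, your deduction is clean; in particular your use of the adjunction $\mathrm{Hom}(j_!K, i_*F) = \mathrm{Hom}(i^*j_!K, F) = 0$ to forbid boundary-supported quotients of $j_!K$, combined with semisimplicity of the pure graded piece, is the standard move.

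One minor remark: you do not actually need the Verdier-duality reduction for the second isomorphism. The same argument run with the cokernel $N$ of $j_{!*}K \hookrightarrow j_*K$ (which is supported on the boundary and has weights $\ge a+1$ by the mirror of your computation) gives $j_{!*}K = W_a(j_*K) = \omega_{\le a}j_*K$ directly.
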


In particular, when $U$ is smooth of dimension $d$, and $\mathbb{L}$ is a local system pure of weight $a$ on $U$, then 
\[
IC(\mathbb{L}) := j_{!*} \mathbb{L}[d] \cong \omega_{\leq a}j_*\mathbb{L}[d]. 
\]

Another feature of weight t-structures  important to us is that it behaves well with respect to cohomological correspondences.

\begin{lemma} \label{oeiuroeiui}
(\cite{MR2350050} lemma 5.1.3.)
Let 
\[
c: p_1^* \mathcal{F} \longrightarrow p_2^! \mathcal{G}
\]
be a cohomological correspondence supported on
\[
\begin{tikzcd}
& Z \arrow[rd,"p_2"] \arrow[ld,"p_1"] &
\\
X & & X.
\end{tikzcd}
\]
with $\mathcal{F} , \mathcal{G} \in D_m^b(X, \overline{\mathbb{Q}}_l)$. 
Let $a$ be an integer, then there exists a unique cohomological correspondence 
\[
\omega_{\leq a} c: p_1^* \omega_{\leq a}\mathcal{F} \longrightarrow p_2^!\omega_{\leq a}  \mathcal{G}
\]
that makes the diagram
\[
\begin{tikzcd}
p_1^* \omega_{\leq a}\mathcal{F} \arrow[d,"\omega_{\leq a} c"] \arrow[r] & p_1^* \mathcal{F} \arrow[d,"c"]
\\
p_2^!\omega_{\leq a}  \mathcal{G} \arrow[r] & p_2^!  \mathcal{G}
\end{tikzcd}
\]
commute, where the horizontal maps are the canonical natural transformation $\omega_{\leq a} \rightarrow \text{id}$. Similarly we have $\omega_{\geq a+1} c$. 
\end{lemma}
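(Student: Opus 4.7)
My plan is to construct $\omega_{\leq a} c$ via a Hom-vanishing obstruction argument tailored to the weight t-structure, exploiting the preservation of Deligne's weights under $p_1^*$ and $p_2^!$.

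First, I would apply $p_1^*$ to the distinguished triangle
\[ \omega_{\leq a}\mathcal{F} \longrightarrow \mathcal{F} \longrightarrow \omega_{\geq a+1}\mathcal{F} \longrightarrow \cdot[1] \]
and $p_2^!$ to the analogous triangle for $\mathcal{G}$. By the standard Weil~II weight bounds, $p_1^*$ preserves the property of having weights $\leq a$ and $p_2^!$ preserves the property of having weights $\geq a+1$. In particular, $p_1^*\omega_{\leq a}\mathcal{F}$ lies in ${}^\omega D^{\leq a}$ while $p_2^!\omega_{\geq a+1}\mathcal{G}$ lies in ${}^\omega D^{\geq a+1}$.

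Next, I precompose $c$ with the canonical map $p_1^*\omega_{\leq a}\mathcal{F} \to p_1^*\mathcal{F}$ to obtain a morphism $\tilde c : p_1^*\omega_{\leq a}\mathcal{F} \to p_2^!\mathcal{G}$, and I apply $\text{Hom}(p_1^*\omega_{\leq a}\mathcal{F},-)$ to the triangle obtained by pulling the weight triangle of $\mathcal{G}$ back through $p_2^!$. In the resulting long exact sequence, both groups $\text{Hom}(p_1^*\omega_{\leq a}\mathcal{F}, p_2^!\omega_{\geq a+1}\mathcal{G}[i])$ for $i = -1, 0$ vanish by the defining orthogonality of the weight t-structure---noting that shifts do not alter the weight class, since weights are measured on perverse cohomology sheaves. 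Therefore the middle arrow of the long exact sequence is an isomorphism, and $\tilde c$ corresponds to a unique lift $\omega_{\leq a} c : p_1^*\omega_{\leq a}\mathcal{F} \to p_2^!\omega_{\leq a}\mathcal{G}$ making the required square commute. The construction of $\omega_{\geq a+1} c$ is entirely symmetric, using instead that $p_1^*({}^\omega D^{\geq a+1}) \subset {}^\omega D^{\geq a+1}$ (which is not needed, as one truncates on the target side) --- more precisely one truncates $p_2^!\mathcal{G}$ and uses the orthogonality in the opposite direction.

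The main obstacle I anticipate is confirming cleanly that the classical Weil~II weight bounds for the six operations translate into the preservation statements $p_1^*({}^\omega D^{\leq a}) \subset {}^\omega D^{\leq a}$ and $p_2^!({}^\omega D^{\geq a+1}) \subset {}^\omega D^{\geq a+1}$, because the weight t-structure is phrased in terms of perverse cohomology while $p_1^*$ and $p_2^!$ are not generally perverse t-exact. This reduces to decomposing into perverse cohomology sheaves and invoking the weight bounds on each piece, as carried out in \cite{MR2350050}.
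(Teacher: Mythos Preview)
The paper does not prove this lemma; it simply cites Morel \cite{MR2350050}, lemma~5.1.3. Your argument is correct and is essentially the standard one: the weight t-structure is stable under shifts, so the orthogonality $\mathrm{Hom}({}^{\omega}D^{\leq a},{}^{\omega}D^{\geq a+1}[i])=0$ holds for all $i$, and once you know $p_1^*({}^{\omega}D^{\leq a})\subset{}^{\omega}D^{\leq a}$ and $p_2^!({}^{\omega}D^{\geq a+1})\subset{}^{\omega}D^{\geq a+1}$ the lift exists and is unique by the long exact sequence you wrote down. Your final paragraph correctly isolates the only nontrivial point: these inclusions follow because $p_1^*$ is perverse right t-exact and preserves Deligne weight $\leq a$, so for $P$ perverse of weight $\leq a$ one has ${}^pH^j(p_1^*P)$ of weight $\leq a+j\leq a$ (as $j\leq 0$); dually for $p_2^!$. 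This is indeed what is checked in Morel's paper, so your sketch matches the cited source.
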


\begin{remark}
It is possible to make $\omega_{\leq a} c $ and $\omega_{\geq a+1} c$ explicit, see \cite{MR2350050} lemma 5.1.4. 
\end{remark}

We now check that $\omega_{\leq a} c$ commutes with composition and addition.
 
\begin{lemma} \label{weightlemma}
We have canonical identifications 
\[
\omega_{\leq a} (c_1 \circ c_2) = (\omega_{\leq a} c_1) \circ (\omega_{\leq a} c_2),
\]
\[
\omega_{\leq a}(c_1+c_2) = \omega_{\leq a} c_1 + \omega_{\leq a} c_2,
\]
and similarly for $\omega_{\geq a+1}$. 
\end{lemma}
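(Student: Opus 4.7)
The plan is to reduce both identities to the uniqueness statement in Lemma \ref{oeiuroeiui}: a cohomological correspondence $p_1^* \omega_{\leq a}\mathcal{F} \to p_2^! \omega_{\leq a} \mathcal{G}$ equals $\omega_{\leq a} c$ for a given correspondence $c$ precisely when it fits into the characterizing square with the canonical natural transformation $\omega_{\leq a} \to \mathrm{id}$. In each case I will exhibit the proposed right-hand side as such a filling, and then invoke uniqueness. The arguments for $\omega_{\geq a+1}$ are completely parallel, using the natural transformation $\mathrm{id} \to \omega_{\geq a+1}$ instead.

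For the composition identity, recall that $c_1 \circ c_2$ is assembled on the fiber product of supports from the pulled-back correspondences $\pi_1^* c_1$ and $\pi_2^! c_2$ together with a base change isomorphism. Replacing each $c_i$ by $\omega_{\leq a} c_i$ yields a candidate morphism with the required source and target. Pasting the two characterizing squares for $c_1$ and $c_2$ together, and using that the base change map is natural in its input and therefore compatible with $\omega_{\leq a} \to \mathrm{id}$, one obtains a single commutative square showing that this candidate fills the characterizing square for $\omega_{\leq a}(c_1 \circ c_2)$. Uniqueness then supplies the identification.

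For the addition identity, the definition of $c_1 + c_2$ first pushes the summands forward to $X \times Y$ and then adds morphisms on this common support, so the argument splits in two. First, I need that $\omega_{\leq a}$ commutes with pushforward of cohomological correspondences: the pushforward formula is a composition of a base change map, $f_*$ applied to $c$, and an adjunction counit, each of which is natural in the input and hence compatible with $\omega_{\leq a} \to \mathrm{id}$, so the same uniqueness argument identifies $\omega_{\leq a}(f_* c)$ with $f_*(\omega_{\leq a} c)$. Second, once both summands have been pushed forward to $X \times Y$, addition is carried out at the level of morphisms with a fixed source and target, and the characterizing square for the sum is simply the termwise sum of the squares for the individual summands; uniqueness gives the desired identity.

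The main obstacle is bookkeeping: the entire formalism of cohomological correspondences --- composition, pushforward, and addition --- is assembled from canonical natural transformations, and Lemma \ref{weightlemma} amounts to the general principle that canonical natural transformations commute with $\omega_{\leq a} \to \mathrm{id}$. The care required is in writing each construction out explicitly enough to verify the naturality statement ingredient by ingredient, but no nontrivial geometric input enters beyond Lemma \ref{oeiuroeiui} itself.
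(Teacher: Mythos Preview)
Your proposal is correct and follows essentially the same route as the paper's proof: for composition you paste the characterizing squares of Lemma~\ref{oeiuroeiui} for $c_1$ and $c_2$ along the base change isomorphism and invoke uniqueness, and for addition you first establish that $\omega_{\leq a}$ commutes with pushforward (again via the uniqueness in Lemma~\ref{oeiuroeiui} applied to the naturality of the base change and counit maps), then use that $\omega_{\leq a}$ respects addition of morphisms on a fixed support. This is exactly the paper's argument, only phrased at a slightly higher level of abstraction.
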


\begin{proof}
We first check the commutation with composition. Let us write down the diagram of the composition $c_1 \circ c_2$,
\[
\begin{tikzcd}
& & Z_2 \times_{X} Z_1 \arrow[rd,"f"] \arrow[ld,"g"] & &
\\
& Z_2 \arrow[rd,"p_2"] \arrow[ld,"p_1"] & & Z_1 \arrow[rd,"q_2"] \arrow[ld,"q_1"] & 
\\
X & & X &  & X 
\end{tikzcd}
\]
and let 
\[
c_2: p_1^* \mathcal{F} \longrightarrow p_2^! \mathcal{G}
\]
\[
c_1: q_1^* \mathcal{G} \longrightarrow q_2^! \mathcal{H}.
\]
Then by definition 
\[
c_1 \circ c_2 : g^*p_1^* \mathcal{F} \overset{g^*c_2}{\longrightarrow} g^* p_2^!\mathcal{G} \cong f^!q_1^* \mathcal{G} \overset{f^!c_1}{\longrightarrow } f^!q_2^!\mathcal{H}
\]
which fits into a commutative diagram
\[
\begin{tikzcd}
g^*p_1^* \mathcal{F} \arrow[r,"g^*c_2"] 
&
g^* p_2^!\mathcal{G} \arrow[r, "\sim"]
&
f^!q_1^* \mathcal{G}  \arrow[r,"f^!c_1"]
&
f^!q_2^!\mathcal{H}
\\
g^*p_1^* \omega_{\leq a}\mathcal{F} \arrow[r,"g^*\omega_{\leq a}c_2"] 
\arrow[u]
&
g^* p_2^!\omega_{\leq a}\mathcal{G} \arrow[r, "\sim"]
\arrow[u]
&
f^!q_1^* \omega_{\leq a}\mathcal{G}  \arrow[r,"f^!\omega_{\leq a}c_1"]
\arrow[u]
&
f^!q_2^!\omega_{\leq a}\mathcal{H}
\arrow[u]
\end{tikzcd}
\]
where as usual the vertical maps are the natural transformation $\omega_{\leq a} \rightarrow \text{id}$. The first and third square commute  by lemma \ref{oeiuroeiui}, and the middle square commute by the naturality of the base change isomorphism $g^*p_2^! \cong f^!q_1^*$. Now by lemma \ref{oeiuroeiui} again, we see that the composition of lower horizontal arrows is $\omega_{\leq a } (c_{1} \circ c_2)$, but it is also $(\omega_{\leq a} c_1) \circ (\omega_{\leq a} c_2)$ 
by the definition of composition. 

The preservation of addition is similar, and we only sketch the argument. Suppose that we are in the situation of section \ref{fjdlseiq}.  With the same notation as in $loc.cit.$,   we have 
\[
\omega_{\leq a} c_1 + \omega_{\leq a } c_2 = (p_1 \times p_2)_* \omega_{\leq a}c_1 + (q_1 \times q_2)_*\omega_{\leq a} c_2.
\]
One can check using lemma \ref{oeiuroeiui} as in the previous paragraph that  
$(p_1 \times p_2)_* \omega_{\leq a}c_1 =  \omega_{\leq a}(p_1 \times p_2)_*c_1$, and 
$(q_1 \times q_2)_*\omega_{\leq a} c_2 = \omega_{\leq a}(q_1 \times q_2)_* c_2$, then 
\[
\omega_{\leq a} c_1 + \omega_{\leq a } c_2 = \omega_{\leq a}((p_1 \times p_2)_* c_1 + (q_1 \times q_2)_* c_2) 
= \omega_{\leq a} (c_1 +c_2). 
\]
\end{proof}

\section{Proof of theorem \ref{eouril}} 

Let $a$ be the weight of the pure local system $\mathbb{L}$, and $d$ the dimension of $X$. We make essential use of the characterization 
\[
IC(\mathbb{L}) \cong \omega_{\leq a} j_* \mathbb{L}[d],
\]
where $j: X \hookrightarrow \overline{X}$ is the open embedding. 

First, we know from lemma \ref{Fujiwa} and \ref{uniqueFuj} that $\Psi$ and the coefficients of $P(T)$ extends canonically to cohomological correspondences from $j_{*} \mathbb{L}[d]$ to itself if we choose compactifications of the support of correspondences, and it is independent of the choice  once we pushforward them to $\overline{X}\times \overline{X}$. 

Next, lemma \ref{oeiuroeiui}  tells us that the extended cohomological correspondences induce naturally cohomological correspondences from $\omega_{\leq a} j_* \mathbb{L}[d]$ to itself. Thus the action of the coefficients of $P(T)$ and $\Psi$ on $H^*(\overline{X}, IC(\mathbb{L}))$ is canonically defined. Indeed, they already induce canonical cohomological correspondences from $\omega_{\leq a} j_* \mathbb{L}[d]$ to itself with support on $\overline{X} \times \overline{X}$, which we  denote by $\omega_{\leq a} j_* \Psi$ and (the coefficients of) $\omega_{\leq a} j_* P(T)$. Moreover, for any cohomological correspondence $c$ from $\mathbb{L}$ to itself with support proper over $X$ under the second leg,  we abuse notation by denoting $\omega_{\leq a} j_* c$ 
the canonical cohomological correspondence as constructed above from $\omega_{\leq a} j_* \mathbb{L}[d]$
to itself supported on $\overline{X} \times \overline{X}$. 

Now, we see from lemma \ref{additionlemma} and \ref{weightlemma} that the operation $\omega_{\leq a} j_* $ preserves addition and composition of cohomological correspondences, therefore 
\[
\omega_{\leq a} j_* (P(\Psi)) = (\omega_{\leq a} j_*P) (\omega_{\leq a} j_* \Psi) 
\]
as cohomological correspondences from 
$\omega_{\leq a} j_* \mathbb{L}[d]$ to itself supported on $\overline{X} \times \overline{X}$. By our assumption $P(\Psi)=0$, so 
\[
(\omega_{\leq a} j_*P) (\omega_{\leq a} j_* \Psi) =0 
\]
as desired, which induces the corresponding relation on the cohomology groups. 

To deduce corollary \ref{ES}, we need to check that $\omega_{\leq a} j_*$ of Hecke operators and Frobenius  on open Shimura varieties are the same as its natural extensions to minimal compactifications, but this follows directly from the unique extension property of 
lemma \ref{oeiuroeiui}  and \ref{Fujiwa}.

\begin{remark}
The formation of addition in our treatment is slightly ad hoc.  A better framework is introduced in \cite{Zhu}, in which we consider a groupoid $C \rightrightarrows X$ of spaces satisfying suitable properness addition, and cohomological correspondences from $\mathcal{F} $ to $\mathcal{G}$ with support on $C$. The groupoid structure allows us to pushforward the composition back to cohomological correspondences supported on $C$, and we can add cohomological correspondences directly. This is the situation when we work with Shimura varieties, where we take $C$ to be the stack parameterizing $p$-power isogenies of universal abelian varieties over Shimura varieties. The Hecke operators and Frobenius all live as cohomological correspondences supported on $C$. 

For our purpose, the problem with this treatment is that we need to compactify  $C$ to a groupoid over the fixed compactification $\overline{X}$ of $X$, in general this seems hopeless. In the special case of Shimura varieties with $C$ being stack of $p$-power isogenies, it is possible that we can make use of the finiteness of the correspondence to obtain the desired compaction (maybe weaker than being a groupoid) over $\overline{X}$. Our approach is more flexible, and avoids the issue of choosing nice compactifications.   
\end{remark}

\bibliographystyle{alpha} 
\bibliography{ref}
\end{document}